\newcommand{\diam}{\text{\,\rm {diam}}}   
\newcommand{\dd}{\text{\,\rm d}}             
\newcommand{\ee}{\text{\rm e}}
\newcommand{\Rd}{\ensuremath{\mathbf{R}^{d}}}
\newcommand{\cov}{\mathrm{cov}}
\theoremstyle{plain}
\newtheorem{theorem}{Theorem}[section]
\newtheorem{lemma}[theorem]{Lemma}
\newtheorem{defi}{Definition}[section]
\newtheorem{remark}[theorem]{Remark}
\newcommand{\ve}{{\varepsilon}}
\newcommand{\R}{{\mathbf R}}
\newcommand{\E}{{\mathbf E}}
\newcommand{\F}{{\cal F}}
\renewcommand{\P}{{\mathbf P}}
\newcommand{\X}{{\mathbb X}}
\DeclareMathOperator*{\esssup}{esssup}
\title{\bf EXPONENTIAL GROWTH RATE FOR DERIVATIVES OF STOCHASTIC FLOWS } 
\date{}
\author{HOLGER VAN BARGEN$^\ast$, MICHAEL SCHEUTZOW$^\dagger$ \and SIMON WASSERROTH$^\ddagger$\\
 {\small Institut f\" ur Mathematik, MA 7-5, Technische Universit\"at Berlin} \\ {\small Stra\ss e des 17. Juni 136,
10623 Berlin, Germany} \\ {\small $^\ast$van\_barg@math.tu-berlin.de}\\ {\small $^\dagger$ms@math.tu-berlin.de}\\{\small $^\ddagger$wasserroth@math.tu-berlin.de}}
\begin{document} 

\maketitle

{



\small

\noindent{\slshape\bfseries Abstract.} 
We show that for a large class of stochastic flows the spatial derivative  
grows at most exponentially fast even if one takes the supremum over a bounded set of initial points. We derive explicit bounds 
on the growth rates that  depend on the local characteristics of the flow and the box dimension of the set. 
\bigskip

\noindent{\slshape\bfseries Keywords.}
stochastic flows, stochastic differential equations, spatial derivatives, exponential growth

\bigskip

\noindent {\slshape\bfseries AMS Subject Classification: }60H10, 60F15, 60G17, 60G60, 37H15 


}
\section{Introduction and Set-Up}
The evolution of the diameter of a bounded set under the action of a stochastic flow has been studied since the 1990's  
(see~\cite{CSS99},~\cite{CSS00},~\cite{DKK04},~\cite{LS01},~\cite{LS03}, \cite{SS02},  and the survey article~\cite{SCH09} to name just 
a few references). For a large class of flows -- including isotropic Brownian flows (IBFs)  with non-negative top Lyapunov exponent -- 
the diameter is known to be linearly growing in time. 
In this paper, we will consider the evolution of the spatial derivative of a flow in time and derive an explicit upper 
bound on the supremum of the spatial derivative taken over a bounded set.  Our bound depends on the box dimension of the set.
In the case of IBFs such a result has been obtained in~\cite{vB09} with a different (and more technical) proof. We will be much more 
general with our set-up but, contrary to  \cite{vB09}, will not derive lower bounds for the growth rates. 

We point out that there is a close link between this paper and work of Peter Imkeller: in~\cite{ImSch99}, the growth of the spatial 
derivative of a flow in the spatial direction was studied over a fixed time horizon $[0,T]$ and we did not care about constants 
(even $T$ was regarded as a constant). 
Still, the proof of Lemma \ref{le:twopointderivative}, which constitutes the core of our results, 
largely follows that of Proposition  2.3 of \cite{ImSch99}. Apart from keeping track of constants, our proof here differs from that 
in~\cite{ImSch99} towards the end 
when we apply a non-linear Gronwall-type Lemma (the usual Gronwall Lemma will not provide an exponential growth rate in $T$). 

Exponential bounds on the growth of spatial derivatives play an important role in the proof of Pesin's formula for stochastic 
flows (see~\cite{LQ95}). They can also be used to obtain bounds on the exponential growth rate of e.g.~the length of a curve under a flow. 
Even though we try to keep track of constants, we make no claims about  optimality (and we conjecture that our bound is far from optimal). 
We give explicit formulas for the exponential growth rate only for the first order derivative but indicate how such bounds 
can be obtained also for higher order derivatives under additional smoothness assumptions.

The paper is organized as follows: we start by defining a suitable class of stochastic flows. Then, we provide a general result -- 
Theorem \ref{th:zweites} -- which shows how one can obtain exponential growth rates for a random field $\psi$ indexed by $\R^d$ given moment 
bounds on the field and on two-point differences of the field. Afterwards, we apply this theorem to the derivative of a stochastic flow. 
Here, the main task is to compute the moment bounds needed in order to apply Theorem \ref{th:zweites}. Then, we specialize to IBFs.\\ 
  

Let us introduce our set-up which is essentially the same as in  \cite{ImSch99} and is based on \cite{Ku90}.

Let $F(x,t), t\geq0$ be a family of $\Rd$-valued continuous semimartingales on a filtered probability space 
$(\Omega,\F,(\F_t)_{t\geq 0},\P)$ indexed by $x\in\Rd$, starting at 0. 
Let $F(x,t)=M(x,t)+V(x,t)$ be the canonical decomposition into a local martingale $M$ and a process $V$ of locally bounded variation 
(both starting at 0). We will assume throughout that both $M$ and $V$ are jointly continuous in $(x,t)$. Furthermore we assume 
that there exist $a:\Rd\times\Rd\times[0,\infty[\times\Omega\to\R^{d\times d}$ which is continuous in the first two and predictable in the 
last two variables and $b:\Rd\times[0,\infty[\times\Omega\to\Rd$ which is continuous in the first and predictable in the 
last two variables such that
$$\langle M_i(x,.),M_j(y,.)\rangle(t)=\int_0^ta_{ij}(x,y,u)\dd u,\hspace{5mm}V_i(x,t)=\int_0^tb_i(x,u)\dd u.  $$ 
Here, $\langle.,.\rangle$ denotes the joint quadratic variation. 
The pair of random fields $(a,b)$ is called the {\em local characteristics} of the semimartingale field $F$.
We will abbreviate $\mathcal{A}(x,y,t):=a(x,x,t)-a(x,y,t)-a(y,x,t)+a(y,y,t)$ 
(which is the derivative of the quadratic variation of $M(x,t)-M(y,t)$). Throughout, we will assume that the following hypothesis 
holds:\\
\newpage
\noindent {\bf Hypothesis (A):}

\begin{align*}
\esssup_{\omega\in\Omega}&\sup_{t \ge 0} \sup_{x,y\in \Rd}\Big(\frac{\|a(x,y,t)\|}{(1+|x|)(1+|y|)}
+ \sum_{k=1}^d \|D_{x_k} D_{y_k} a(x,y,t)\|\\ &+ \sum_{k=1}^d \| D_{x_k} D_{y_k}a(.,.,t)\|^\sim \Big)<\infty, \mbox{ and }\\
\esssup_{\omega\in\Omega}&\sup_{t \ge 0} \Big( \sup_{x\in \Rd} \frac{|b(x,t)|}{1+|x|}+\sup_{x\in\Rd}\|D_x b(x,t)\|+
\sup_{x\neq y \in\Rd}\frac{ \|D_xb(x,t)-D_y b(y,t)\|}{|x-y|}\Big)<\infty,
\end{align*}
where 
$$
 \| f(.,.)\|^\sim:=\sup_{x\neq x',y\neq y'}\left\{\frac{\|f(x,y)-f(x',y)-f(x,y')+f(x',y')\|}{|x-x'|\,|y-y'|} \right\}.
$$
Since Hypothesis (A) implies the assumptions of \cite[Theorem 4.6.5]{Ku90} (with $k=1$, $\delta=1$), the stochastic differential equation
\begin{equation}\label{sde} \dd X(t)=F(X(t),\dd t ),\qquad X(s)=x,\;t\ge s \end{equation} 
not only admits a unique solution for each fixed $x \in \R^d$ and $s \ge 0$, but even generates a 
{\em stochastic flow of diffeomorphisms} $\phi$, i.e.~there exist a random field 
$\phi:[0,\infty)^2 \times \R^d \times \Omega \to \R^d$ and a set $\Omega_0$ of full measure such that
\begin{itemize}
\item $t \mapsto \phi_{s,t}(x)$, $t\ge s$ solves \eqref{sde} for all $x \in \R^d$, $s \ge 0$.
\item $\phi_{s,t}(\omega)$ is a diffeomorphism on $\Rd$ for all $s,t \ge 0$, $\omega \in \Omega_0$.
\item $\phi_{s,u}=\phi_{t,u} \circ \phi_{s,t}$ for all $s,t,u \ge 0$, $\omega \in \Omega_0$.
\item $(s,t,u) \mapsto \phi_{s,t}(x)$ is continuous for all  $\omega \in \Omega_0$. 
\end{itemize}
 
We will often write $x_t:=\phi_t(x):=\phi_{0,t}(x,\omega) $. 
 


\section{Exponential Growth Rates: General Results}


In the following lemma and theorem, $o(T)$ stands for a function $g(T)$ which may depend on $q$, but not on $x,y$ 
such that $\lim_{T \to \infty} g(T)/T=0$.

\begin{lemma}\label{erstes} Let $(E,\rho)$ be a complete, separable metric space and let 
$(t,x) \mapsto \psi_t(x)$ be a continuous 
random field on $[0,\infty) \times \R^d$ with values in $(E,\rho)$ which satisfies
$$
\E \sup_{0 \le t \le T} \rho(\psi_t(x),\psi_t(y))^q \le |x-y|^q \exp\{ (cq^2+\hat c q)T + o(T)\}
$$
for some $q > d$ and all $x,y \in \R^d$.
Then, for $u>0$, we have
\begin{align*}
\P \Big\{ &\sup_{x,y \in [0,1]^d} \sup_{0 \le t \le T}  \rho(\psi_t(x),\psi_t(y)) \ge u \Big\}
\le \exp\{(cq^2+\hat c q)T + o(T)\} u^{-q},
\end{align*}
\end{lemma}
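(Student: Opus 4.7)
The plan is to apply a standard Kolmogorov--Chentsov chaining over dyadic grids, exploiting the Hölder gap $q>d$ for summability. Writing $K(T):=\exp\{(cq^2+\hat c q)T+o(T)\}$, note that since $o(T)$ is allowed to depend on $q$, any finite constant $C_{d,q}$ can be silently absorbed into it.

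For $n\ge 0$ introduce the dyadic lattices $D_n:=2^{-n}\Z^d\cap[0,1]^d$ (with $|D_n|\le 2^d\cdot 2^{nd}$), and for $n\ge 1$ and $x\in D_n$ let $\pi_{n-1}(x)$ denote the nearest point of $D_{n-1}$, so that $|x-\pi_{n-1}(x)|\le\sqrt{d}\,2^{-n}$. Define the single-step maxima
\begin{align*}
B_n&:=\max_{x\in D_n}\sup_{0\le t\le T}\rho(\psi_t(x),\psi_t(\pi_{n-1}(x))),\quad n\ge 1,\\
B_0&:=\max_{x,y\in D_0}\sup_{0\le t\le T}\rho(\psi_t(x),\psi_t(y)).
\end{align*}
A union bound applied to the hypothesis then yields
\begin{align*}
\E B_n^q\le|D_n|\cdot(\sqrt{d}\,2^{-n})^q\,K(T)\le C_{d,q}\,2^{-n(q-d)}K(T)\quad(n\ge 1),
\end{align*}
and $\E B_0^q\le C'_{d,q}\,K(T)$.

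The chaining is now routine: given $x,y\in D_N$, iterating $\pi_{n-1}$ produces sequences $x=x_N,\ldots,x_0$ and $y=y_N,\ldots,y_0$ inside the respective grids, and a triangle inequality followed by $\sup_{0\le t\le T}$ gives
\begin{align*}
\sup_{0\le t\le T}\rho(\psi_t(x),\psi_t(y))\le B_0+2\sum_{n=1}^{\infty}B_n=:\Delta,
\end{align*}
and spatial continuity of $\psi$ extends this bound from the dense set $\bigcup_N D_N$ to all of $[0,1]^d$. Minkowski's inequality, together with the moment estimates above, gives $\|\Delta\|_q\le C^*_{d,q}\,K(T)^{1/q}$ because the geometric series $\sum_{n\ge 1}2^{-n(q-d)/q}$ converges precisely when $q>d$. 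Raising to the $q$-th power, absorbing $(C^*_{d,q})^q$ into $o(T)$, and applying Markov's inequality $\P(\Delta\ge u)\le u^{-q}\E\Delta^q$ delivers the claimed tail bound.

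The only point requiring attention is bookkeeping: the chaining constant depends on both $q$ and $d$, and the passage $\|\Delta\|_q^q\le (C^*_{d,q})^q K(T)$ must end up back inside a clean exponential. The statement of the lemma is tailored to this by letting $o(T)$ depend on $q$, so there is no genuine obstacle beyond a careful summation of moments.
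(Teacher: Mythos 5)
Your proof is correct and is essentially the same argument as the paper's: the paper simply cites Kolmogorov's continuity theorem in the chaining form of \cite[Lemma 2.1]{SCH09}, and your dyadic chaining (union bound on single-step maxima, summability of the geometric series from the gap $q>d$, Minkowski, then Markov) is exactly a self-contained proof of that cited result, with the multiplicative constants correctly absorbed into the $o(T)$ term.
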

\begin{proof} This follows from Kolmogorov's continuity theorem (e.g.~in the version of~\cite[Lemma 2.1]{SCH09}).
\end{proof}


\begin{theorem}\label{th:zweites} Let $(E,\|.\|)$ be a separable real Banach space and let $(t,x) \mapsto \psi_t(x)$ 
be a continuous random field on $[0,\infty) \times \R^d$ with values in $(E,\|.\|)$ which satisfies
\begin{equation}\label{oans}
\E \sup_{0 \le t \le T} \|\psi_t(x)-\psi_t(y)\|^q \le |x-y|^q \exp\{ (cq^2+\hat c q)T + o(T)\}
\end{equation}
for some $c>0$, $\hat c \in \R$, all $q > d$ and all $x,y \in \R^d$. 
Assume further that 
\begin{equation}\label{zwo}
\sup_{x}\E \sup_{0 \le t \le T} \|\psi_t(x)\|^q \le \exp\{ (kq^2+\hat k q)T + o(T)\}
\end{equation}
for some $k>0$, $\hat k \in \R$ and for all $q \ge 0$. If $\X$ is any compact subset of $\R^d$ with box 
dimension $\Delta$, then
\begin{equation}\label{toshow}
\limsup_{T \to \infty} \frac 1T \log \sup_{x \in \X} \sup_{0 \le t \le T}  \|\psi_t(x)\| \le \xi \;\mbox{ a.s.},
\end{equation}
where $$\xi=
\left\{\begin{array}{ll}
\hat k & \mbox{ if }\; \hat k \ge cd+\hat c\\
\hat k + 2\sqrt{k\Delta \gamma_1}&\mbox{ if }\;  \hat k \le cd+\hat c \;\mbox{ and }\; 
2\sqrt{ck}\Delta d+cd^2-2c\Delta d +\Delta(\hat k-\hat c)\ge 0\\
\hat k + 2\sqrt{k\Delta \gamma_2} &\mbox{ if }\;  \hat k \le cd+\hat c \;\mbox{ and } \;
2\sqrt{ck}\Delta d+cd^2-2c\Delta d +\Delta(\hat k-\hat c)\le 0,
\end{array}\right.
$$
where  
\begin{align*}
\gamma_1:&=
\left\{\begin{array}{ll}\frac{cd + \hat c - \hat k}{2\sqrt{kd}}& \mbox{ if }\;\Delta = d\\
\Big( \Big(1-\frac {\Delta}{d}\Big)^{-1} \Big(-\sqrt{k\Delta}
+\sqrt{k\Delta+\big(1-\frac{\Delta}d\big)(cd+\hat c-\hat k)}\Big) \Big)^2& \mbox{ if }\;\Delta < d
\end{array}\right.\\
\gamma_2:&=\Big(\sqrt{c\Delta}+\sqrt{k\Delta}+\sqrt{(\sqrt{c}-\sqrt{k})^2 \Delta - \hat k + \hat c}\Big)^2.
\end{align*}
\end{theorem}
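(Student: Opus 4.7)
The plan is to combine an $\varepsilon$-net cover of $\X$ (exploiting the box dimension) with Markov applied to \eqref{zwo} at the net centres, with Lemma~\ref{erstes} applied inside each small ball to control the oscillations, and then to push the resulting tail bound through Borel--Cantelli after an optimal choice of the parameters.

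Fix $\delta>0$ and choose $\varepsilon$ small enough that $\X$ is covered by $N(\varepsilon)\le\varepsilon^{-(\Delta+\delta)}$ closed balls $B(x_i,\varepsilon)$ with centres $x_i\in\X$. The decomposition
$$\sup_{x\in\X}\sup_{t\le T}\|\psi_t(x)\|\le\max_i\sup_{t\le T}\|\psi_t(x_i)\|+\max_i\sup_{\substack{x\in\X\\|x-x_i|\le\varepsilon}}\sup_{t\le T}\|\psi_t(x)-\psi_t(x_i)\|$$
splits the task in two. For the first term, a union bound followed by Markov with exponent $q_1\ge0$ against \eqref{zwo} bounds the probability that it exceeds $u/2$ by $N(\varepsilon)(2/u)^{q_1}\exp\{(kq_1^2+\hat kq_1)T+o(T)\}$. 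For the second, I would rescale each ball to $[0,1]^d$, at which point \eqref{oans} picks up an extra factor $\varepsilon^{q_2}$; Lemma~\ref{erstes} (valid for $q_2>d$) together with another union bound then yields $N(\varepsilon)\varepsilon^{q_2}(2/u)^{q_2}\exp\{(cq_2^2+\hat cq_2)T+o(T)\}$.

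Next, parameterize $\varepsilon=e^{-\alpha T}$ and $u=e^{\xi T}$, so that after dividing logs by $T$ the two exponents become
$$(\Delta+\delta)\alpha+kq_1^2+(\hat k-\xi)q_1 \quad\text{and}\quad (\Delta+\delta-q_2)\alpha+cq_2^2+(\hat c-\xi)q_2$$
up to $o(1)$. Demanding both strictly negative, minimizing the first over $q_1>0$ yields $\xi>\hat k+2\sqrt{k(\Delta+\delta)\alpha}$; minimizing the second over $q_2>d$ yields either $\xi>\hat c-\alpha+2\sqrt{c(\Delta+\delta)\alpha}$ (when the unconstrained minimizer $\sqrt{(\Delta+\delta)\alpha/c}$ lies above $d$) or $\xi>\hat c+cd+((\Delta+\delta)/d-1)\alpha$ (when it does not). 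Letting $\delta\downarrow0$ and minimizing the pointwise maximum of these two lower bounds in $\alpha\ge0$ yields the cases in the theorem: if $\hat k\ge cd+\hat c$, the minimum is attained at $\alpha=0$ and equals $\hat k$; otherwise the two bounds must be equated, the boundary-versus-interior regime for $q_2$ being decided by the sign of $2\sqrt{ck}\Delta d+cd^2-2c\Delta d+\Delta(\hat k-\hat c)$, and in either sub-case the equation reduces to a quadratic in $\sqrt\alpha$ whose positive root is $\sqrt{\gamma_1}$ respectively $\sqrt{\gamma_2}$, giving $\xi=\hat k+2\sqrt{k\Delta\gamma_j}$.

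For the almost sure conclusion, fix $\xi'>\xi$. The tail bound is summable in $T=n\in\N$, so Borel--Cantelli delivers $\sup_{x\in\X,\,t\le n}\|\psi_t(x)\|\le e^{\xi'n}$ eventually a.s.; monotonicity of $T\mapsto\sup_{t\le T}\|\psi_t\|$ interpolates to real $T$, and letting $\xi'$ decrease to $\xi$ through a countable sequence yields \eqref{toshow}. The core difficulty, in my view, is the two-variable optimization in the third paragraph: one must verify that the stated sign condition on $2\sqrt{ck}\Delta d+cd^2-2c\Delta d+\Delta(\hat k-\hat c)$ corresponds precisely to the switch between the boundary ($q_2=d$) and interior ($q_2=\sqrt{\Delta\alpha/c}$) optimum, and then solve the two resulting quadratics cleanly; the probabilistic scaffolding of cover plus Kolmogorov plus Borel--Cantelli is essentially routine once the parametrization $\varepsilon=e^{-\alpha T}$, $u=e^{\xi T}$ is chosen.
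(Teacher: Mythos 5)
Your proposal is correct and follows essentially the same route as the paper: cover $\X$ by $N\le \ee^{\gamma T(\Delta+\ve)}$ balls of diameter $\ee^{-\gamma T}$, control the centers by Markov's inequality via \eqref{zwo} and the oscillation within each ball by Lemma~\ref{erstes} (the rescaling producing exactly the extra factor $\ee^{-\gamma q T}$ that appears in the paper's bound for $B(r)$), optimize over $q$ and then over $\gamma$, and conclude by Borel--Cantelli together with monotonicity in $T$. Your intermediate two-term expression to be minimized over $\alpha$ coincides with the paper's \eqref{ximax}, and your case analysis for the final infimum matches the one the paper leaves implicit.
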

\begin{proof}
Let $\ve >0$. For each $\gamma >0$, we can cover the set $\X$ with $N\le \ee^{\gamma T(\Delta + \ve)}$ balls of diameter 
$\ee^{-\gamma T}$ in case $T$ is large enough. For given such $\gamma,T$ we denote these balls by $\X_1,...\X_N$ and 
their centers by $x_1,...,x_N$. Let $r >0$.
Then, using Lemma~\ref{erstes}, we obtain
$$
\P\big\{\exists i \in \{1,...,N\} \mbox{ s.t. } \sup_{0 \le t \le T} \diam (\psi_t(\X_i)) \ge \ee^{rT}\big\}
\le \ee^{\gamma T(\Delta + \ve)}\ee^{(cq^2+(\hat c - \gamma -r)q)T + o(T)},
$$
and therefore
\begin{align*}
B(r):=&\limsup_{T \to \infty} \frac 1T \log \P\big\{\exists i \in \{1,...,N\} 
\mbox{ s.t. } \sup_{0 \le t \le T} \diam (\psi_t(\X_i)) \ge \ee^{rT}\big\}\\ \le&\gamma \Delta + cq^2+(\hat c-\gamma -r)q.
\end{align*}
Optimizing over $q > d$ yields
$$
B(r) \le \left\{
\begin{array}{cc}
\gamma\Delta-\frac{(r-\hat c+\gamma)^2}{4c}&\mbox {if } r\ge 2cd+\hat c-\gamma\\
\gamma\Delta+(cd+\hat c-\gamma-r)d&\mbox {if } r< 2cd+\hat c-\gamma\end{array}
 \right..
$$
Further,
$$
\P\big\{\exists i \in \{1,...,N\} \mbox{ s.t. } \sup_{0 \le t \le T}\|\psi_t(x_i)\|\ge  \ee^{rT}\big\} 
\le \ee^{\gamma T(\Delta + \ve)}\ee^{(kq^2+(\hat k  -r)q + o(1))T}
$$
and therefore
$$
C(r):=\limsup_{T \to \infty} \frac 1T \log \P\big\{\exists i \in \{1,...,N\} \mbox{ s.t. } 
\sup_{0 \le t \le T}\|\psi_t(x_i)\|\ge  \ee^{rT}\big\} \le \gamma \Delta + kq^2 + (\hat k - r)q.
$$
Optimizing over $q \ge 0$, we get
$$
C(r) \le \gamma \Delta - \frac{(r-\hat k)^2}{4k}
$$ 
provided that $r \ge \hat k$ which we will assume to hold from now on.

Once we know that for a particular value of $r>0$
$$
A(r):=\limsup_{T \to \infty} \frac 1T \log \P\{ \sup_{0\le t\le T}\sup_{x \in \X} \|\psi_t(x)\|\ge 2e^{rT}\} <0,
$$
then a simple Borel-Cantelli argument (using the fact that 
$T \mapsto \sup_{0\le t\le T}\sup_{x \in \X} \|\psi_t(x)\|$ 
is non-decreasing) shows that \eqref{toshow} holds with $\xi$ replaced by $r$. Since 
$A(r) \le B(r) \vee C(r)$, we have $A(r)<0$ whenever there exists some $\gamma>0$ such that both upper bounds of 
$B(r)$ and $C(r)$ are negative. 
Defining $\xi$ as  the infimum over all such $r$, we obtain
\begin{equation}\label{ximax}
\xi=\inf_{\gamma>0}\Big(
(\hat k +2\sqrt{k\gamma\Delta}) \vee \left\{\begin{array}{cc}
2\sqrt{c\gamma\Delta}+\hat c-\gamma&:\gamma \Delta \ge c d^2\\
\gamma\Delta d^{-1}+cd+\hat c-\gamma &:\gamma \Delta \le c d^2
\end{array}\right.\Big).
\end{equation}
Computing the infimum, we obtain the result in the theorem.
\end{proof}
\begin{remark} It follows from \eqref{ximax} that 
$$
\hat k \le \xi \le (cd + \hat c)\vee \hat k.
$$
Note that the lower bound is attained in case $\Delta=0$.
\end{remark}
\begin{remark}
Our assumptions on the range of admissible values of $q$ in Theorem \ref{th:zweites} are a bit arbitrary (but motivated by 
applications to IBFs). It is clear from the proof that  \eqref{toshow} still holds with a larger value of $\xi$ if 
we only assume that \eqref{oans} holds for {\em some} $q>d$ and that  \eqref{zwo} holds for {\em some} $q>0$ (the values of 
$q$ can be different).
\end{remark}

%
%
%
%
%
\section{Application to the Derivative of a Stochastic Flow}
Next, we want to use the results in the previous section to obtain bounds on the exponential growth rate of 
the supremum of  the  derivative of a stochastic flow taken over all initial points in a compact set 
of box dimension $\Delta$. In order to apply Theorem \ref{th:zweites}, we have to estimate moments of the difference 
of derivatives of a stochastic flow. We start by introducing some more notation (as in~\cite{ImSch99}).\\

Let $\phi=(\phi^1,\ldots,\phi^d)$ be a stochastic flow of diffeomorphisms generated by~\eqref{sde} 
satisfying Hypothesis (A) and let $p\geq1$. Fix $i\in\{1,\ldots,d\}$ and define
$Y_j(t):=D_i\phi^j_t(x)$, $Z(t):=\sup_{0\leq s\leq t}\big(\sum_{j=1}^d Y_j^2(s)\big)^{1/2}$, 
$f_p(t)=\max_i\big(\E Z(t)^p \big)^{1/p} $, and $\tilde f_p(t)=\max_i\big(\E\big(\sum_{j=1}^d Y_j^2(t) \big)^{p/2}\big)^{1/p}$. 
Here and in the following, we write $D_k$ instead of $D_{x_k}$ or $D_{y_k}$. 
We will also need two two-point versions   
$V_j(t):=D_i \phi^j_{0,t}(x) - D_i \phi^j_{0,t}(y)$, $W(t):=\sup_{0\le s\le t} \big(\sum_{j=1}^d  V_j^2(s) \big)^{1/2} $ 
and $g_p(t):=\max_i\big(\E\big(W^p(t)\big)^{1/p}$. Let $C_p$ denote the constant in Burkholder's inequality. It is well-known, that 
there exists a constant $k_5>0$ such that $C_p \le (k_5 p^{1/2})^p$ for all $p \ge 2$ (see \cite{BY82}) (one can choose $k_5=2\sqrt{5}$).
We point out that $D_k D_k$ in front of a function of two spatial arguments means that we differentiate with respect to the $k-$th 
component of {\em both} arguments. Note that we trivially have $\tilde f_p(t) \le f_p(t)$, so the terms $\tilde f_p(t)$ 
which appear in the upper bound of $H$ in the following lemma can be replaced by $f_p(t)$ (but one may get better bounds by not doing this). 


\begin{lemma}\label{le:twopointderivative} 
Let $\phi$ be the flow generated by $F$ satisfying Hypothesis (A) and denote 
\begin{enumerate}
 \item $k_1:=\mathrm{esssup}_{\omega \in \Omega}\sup_{\bar x\in\Rd,t\geq 0,1\le j,k\le d}|D_kD_ka_{jj}(\bar x,\bar x,t)|$,
 \item $k_2:=\mathrm{esssup}_{\omega \in \Omega} \sup_{\bar x,\bar y\in\mathbf{R}^d,t\geq0,1\le j,k \le d} 
\frac{|D_k D_k \mathcal{A}_{jj}(\bar x,\bar y,t)|}{|\bar x-\bar y|^2}$, 
 \item $k_3:=\mathrm{esssup}_{\omega \in \Omega}\sup_{\bar x\in\Rd,t\geq 0,1\le j,k\le d}|D_kb_j(\bar x,t)|$, 
 \item $k_4:=\mathrm{esssup}_{\omega \in \Omega}\sup_{\bar x,\bar y\in\mathbf{R}^d,t\geq0,1\le j,k\le d} 
\frac{ |D_k b_j(\bar x,t)-  D_k b_j(\bar y,t)|}{|\bar x-\bar y|}$. 
\end{enumerate}
Then there exist constants $\Lambda$, $\bar{c}$ and $\sigma \ge 0$ such that for all $p\geq2$, $t \ge0$ and $\bar x,\bar y \in \R^d$, 
\begin{equation}\label{esti}
\Big(\E |\phi_t(\bar x)-\phi_t(\bar y)|^p\Big)^{1/p}\le \bar{c}|\bar x-\bar y|\ee^{(\Lambda +p\sigma^2/2)t}. 
\end{equation}
Further, for all $p \ge 2$, $x \in \R^d$, and all $\alpha_1,\,\alpha_2,\,\alpha_3>1$ whose reciprocals sum up to 1, 
$f_p$ satisfies
\begin{equation}\label{eq:fgleichung}
f_p^2(t)\leq \alpha_1 + \alpha_1 \frac{\alpha_2\bar{d}^2k_1C_p^{2/p}+\sqrt{\alpha_3}\bar{d}k_3}{\alpha_2\bar{d}^2k_1C_p^{2/p}+2\sqrt{\alpha_3}\bar{d}k_3} 
\Big(\exp\{(\alpha_2\bar{d}^2k_1C_p^{2/p}+2\sqrt{\alpha_3}\bar{d}k_3)t\}-1\Big),
\end{equation}
where $\bar{d} = d^{2-1/p}$.
Further, for all $p \ge 2$, $x,y \in \R^d$, and all $\beta_m>1$, $m=1,2,3,4$ whose reciprocals sum up to 1, we have
\begin{equation}\label{eq:ggleichung}
g_p^2(t)\leq H(t) + H(t)\frac{\bar C_1+\sqrt{\bar C_2}}{\bar C_1+2\sqrt{\bar C_2}} \Big(\exp\{(\bar C_1+2\sqrt{\bar C_2})t\}-1\Big),
\end{equation}
where $\bar C_1:=\beta_1d^{3-2/p} C_p^{2/p} k_1$, $\bar C_2:=\beta_3d^{3-2/p}k_3^2 $ and
$$
H(t):=d^3\bar{c}^2|x-y|^2\Big(C_p^{2/p}\beta_2 k_2 \int_0^t \tilde f^2_{2p}(s)\ee^{2(\Lambda+p\sigma^2)s}  \dd s  
+ \beta_4 k_4^2\Big(\int_0^t\tilde f_{2p}(s)\ee^{(\Lambda+\sigma^2p)s}\dd s \Big)^2\Big).
$$
\end{lemma}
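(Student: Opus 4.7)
All three bounds follow the same template---write the (perturbed-)linear SDE for the quantity of interest, apply It\^o to the squared norm, use Burkholder--Davis--Gundy with $C_p\le(k_5\sqrt p)^p$ on the martingale part, apply Hypothesis (A) and $k_1,\dots,k_4$ to bound drift and quadratic variation, and close with a Gronwall-type argument. The plan is to take the three parts in the order stated, since the third uses both the first and the techniques of the second. The first part, bound \eqref{esti}, is the classical pathwise moment estimate: the difference $D(t):=\phi_t(\bar x)-\phi_t(\bar y)$ satisfies a stochastic integral equation with drift-difference controlled by $|\bar x-\bar y|$ and martingale quadratic-variation density $\mathcal{A}$ controlled by $|\bar x-\bar y|^2$ (both from Hypothesis (A)). It\^o applied to $|D|^p$, BDG with $C_p^{1/p}\le k_5\sqrt p$, and linear Gronwall then yield \eqref{esti}, with $\Lambda$ absorbing the drift and Lipschitz constants and with $p\sigma^2/2$ arising from the BDG martingale term.

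For the second part, bound \eqref{eq:fgleichung}, the derivative $Y_j(t):=D_i\phi^j_t(x)$ satisfies the linear variational SDE
$$dY_j=\sum_k Y_k\,D_kM^j(\phi_t(x),dt)+\sum_k Y_k\,D_kb^j(\phi_t(x),t)\,dt,\qquad Y_j(0)=\delta_{ij}.$$
Apply It\^o to $P(t):=\sum_j Y_j^2(t)$ to get $dP=dN+J\,dt$, where $N$ is a local martingale and $J$ collects the drift from $D_kb^j$ together with the It\^o correction involving $D_kD_k a_{jj}$. The key structural remark is that the $d^2\times d^2$ matrix $[D_kD_{k'}a_{jj'}(x,x,t)]_{(j,k),(j',k')}$ is positive semi-definite (it is the instantaneous covariance of $\nabla_x M$), so the trace bound involving $k_1$ gives $d\langle N\rangle/ds\le 4d^2k_1 P^2$ and $|J|\le(d^2k_1+2dk_3)P$. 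Taking $\sup_{s\le t}$ of $P$, passing to $L^{p/2}$ norms on $\Omega$, applying a three-term Jensen/Young inequality with weights $1/\alpha_1+1/\alpha_2+1/\alpha_3=1$ to split $\sup P\le 1+\int|J|+\sup|N|$, applying BDG to the martingale supremum, and using $\int P^2\le(\sup P)\int P$ together with Cauchy--Schwarz to decouple the supremum from the integral, leads after rearrangement to a Bihari-type integral inequality on $f_p^2$ whose explicit closed-form solution is exactly the right-hand side of \eqref{eq:fgleichung}; here $\bar d=d^{2-1/p}$ records the combined trace and $L^p$ factors.

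For the third part, bound \eqref{eq:ggleichung}, the increment $V_j(t):=D_i\phi^j_t(x)-D_i\phi^j_t(y)$ satisfies a perturbed linear SDE whose homogeneous part is linear in $V$ (of the same type as in the second part) and whose inhomogeneous part is
$$\sum_k D_i\phi^k_t(y)\bigl[D_kF^j(\phi_t(x),dt)-D_kF^j(\phi_t(y),dt)\bigr].$$
Apply It\^o to $Q:=\sum_j V_j^2$ and repeat the scheme from the second part, now with four H\"older weights $\beta_1,\dots,\beta_4$: the weights $\beta_1,\beta_3$ handle the $V$-linear homogeneous part and produce the $\bar C_1\int g_p^2+2\sqrt{\bar C_2}\int g_p$ structure, while $\beta_2,\beta_4$ handle the source terms using $k_2$ (difference of second derivatives of $a$) and $k_4$ (Lipschitz of $Db$). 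Bounding the source contribution requires moment bounds on $|\phi_t(x)-\phi_t(y)|$ via \eqref{esti} and on $D_i\phi_t(y)$ via $\tilde f_{2p}$, with a Cauchy--Schwarz inside the expectation decoupling the two factors; the aggregated source contribution is precisely $H(t)$. The same Bihari-type Gronwall as in the second part, with $\alpha_1$ replaced by $H(t)$, then concludes.

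The main obstacle is the non-linear Gronwall step. A naive linearization (e.g.\ $2\sqrt{\alpha_1 u}\le\alpha_1+u$) yields an exponent strictly worse than $\bar C_1+2\sqrt{\bar C_2}$ and incompatible with the growth rate needed downstream in Theorem \ref{th:zweites}; the sharper form requires a Bihari substitution (turning the $\sqrt u$ nonlinearity into a linear first-order ODE) applied to exactly the integral inequality produced above. The other subtle point is extracting the PSD structure of $[D_kD_{k'}a_{jj'}]$ via a clean trace bound to keep $d\langle N\rangle\le 4d^2k_1 P^2\,ds$; without this, the exponent would carry additional $d$-dependent losses.
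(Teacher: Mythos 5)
Your overall architecture (variational SDE, BDG with $C_p\le(k_5\sqrt p)^p$, H\"older weights summing to one, a non-linear Gronwall step) is the right one, but your execution of the moment estimates is genuinely different from the paper's. The paper never applies It\^o to the squared norm $P=\sum_j Y_j^2$: it estimates $\big(\E\sup_s|Y_j(s)|^p\big)^{1/p}$ component by component, splitting the sum over $n$ by Minkowski \emph{before} applying Burkholder to each scalar integral $\int Y_n\,D_nF_j$, so that only the diagonal entries $D_nD_na_{jj}$ (i.e.\ $k_1$) ever appear, and then uses Jensen to pull the $L^p$ norm inside the time integrals. Your alternative -- It\^o on $P$, plus the positive-semidefiniteness of $[D_kD_{k'}a_{jj'}(x,x,t)]$ to get the trace bound $\dd\langle N\rangle\le 4d^2k_1P^2\,\dd s$ -- is a legitimate substitute (the PSD claim is correct, since that matrix is the instantaneous covariance of $\nabla M(x,\cdot)$), and it even changes the structure of the resulting integral inequality: your drift bound $|J|\le(\cdot)P$ produces $\int_0^t f_p^2$ where the paper's route produces $\big(\int_0^t f_p\big)^2$.

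Two points, however, prevent this from being a proof of the lemma \emph{as stated}. First, you claim your inequality's closed-form solution is ``exactly'' the right-hand side of \eqref{eq:fgleichung}; it is not. The specific powers $\bar d=d^{2-1/p}$, the appearance of $C_p^{2/p}$ (rather than $C_{p/2}^{4/p}$, which is what BDG at exponent $p/2$ gives you), and the coefficient $2\sqrt{\alpha_3}\bar dk_3$ of the $\big(\int f_p\big)^2$ term all come from the paper's component-wise route; your route yields a bound of the same exponential form but with different constants, and those constants are the whole point of the lemma since they feed into Theorem \ref{main}. Second, the non-linear Gronwall step -- which you correctly identify as the crux -- is only named, and ``Bihari'' is not the right tool: the nonlinearity is $C_2\big(\int_0^t\sqrt{f}\big)^2$, not $\int_0^t w(f)$, so Bihari's inequality does not apply, and the naive Cauchy--Schwarz reduction $\big(\int_0^t\sqrt f\big)^2\le t\int_0^t f$ destroys the linear-in-$t$ exponent. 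The paper's Appendix A (Lemma \ref{le:gronwall}) resolves this by inserting the weight $\gamma_t\,\ee^{-\lambda(t-s)}\,\dd s$ as a probability measure, applying Jensen to linearize the square, running the ordinary Gronwall lemma on $\ee^{-\lambda t}f(t)$, and then optimizing $\lambda=\sqrt{C_2}$ to obtain the exponent $C_1+2\sqrt{C_2}$ and the prefactor $\frac{C_1+\sqrt{C_2}}{C_1+2\sqrt{C_2}}$. You would need to supply this (or an equivalent linear-system comparison) explicitly; without it the exponents in \eqref{eq:fgleichung} and \eqref{eq:ggleichung} are not established.
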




\begin{proof}
First note that $k_1,...,k_4<\infty$ since $F$ satisfies Hypothesis (A). Assertion \eqref{esti} follows from \cite[Lemma 2.6]{SCH09} 
and the fact that $F$ satisfies (A) (one can choose $\sigma=\tilde a$ and $\Lambda=\tilde b+(d-1)\tilde a^2/2$, 
where $\tilde b$ is a deterministic upper bound of the Lipschitz constant of $b$ and $\tilde a\ge 0$ is chosen such that 
$\|\mathcal{A}(x,y,t,\omega)\|\le \tilde a^2|x-y|^2$ for all $x,y \in \R^d$ and almost all $\omega \in \Omega$). Fix $i \in \{1,...,d\}$. 
We have
$$
Y_j(s)= \delta_{i,j}+\sum_{n=1}^d \int_0^s Y_n(u)D_n F_j(x_u,\dd u)
$$
(see \cite[p.174 (21)]{Ku90} or~\cite[(18)]{ImSch99}).
Applying Burkholder's inequality, we get
\begin{align}
\Big(\E&\sup_{0\leq s\leq t}|Y_j(s)|^p \Big)^{1/p}
\le \delta_{i,j}+\sum_{n=1}^d \Big(\E\big(\sup_{0\leq s\leq t}\int_0^s Y_n(u)D_n F_j(x_u,\dd u)\big)^p  \Big)^{1/p}\nonumber\\
&\le\delta_{i,j}+ C_p^{1/p}\sum_{n=1}^d\Big(\E\big(\int_0^t Y^2_n(u)D_nD_na_{jj}(x_u,x_u,u)\dd u\big)^{p/2}\big)^{1/p}\nonumber\\
& \hspace{.5cm} + \sum_{n=1}^d\Big(\E\big(\int_0^t |Y_n(u)  D_nb_j(x_u,u)|\dd u\big)^p\Big)^{1/p}\nonumber\\
&\le\delta_{i,j}+C_p^{1/p}\sqrt{k_1} \sum_{n=1}^d \Big( \E \big(\int_0^t Y_n^2(u)\dd u\big)^{p/2}\Big)^{1/p}\nonumber\\ 
& \hspace{.5cm} + k_3 \sum_{n=1}^d\Big(\E\big(\int_0^t |Y_n(u)|\dd u\big)^p\Big)^{1/p}.\label{zweites}
\end{align}
Since $p \ge 2$, Jensen's inequality implies
\begin{align*}
\sum_{n=1}^d &\Big( \E \big(\int_0^t Y_n^2(u)\dd u\big)^{p/2}\Big)^{1/p} 
\le \sqrt{d} \Big(\sum_{n=1}^d \Big( \E \big(\int_0^t Y_n^2(u)\dd u\big)^{p/2}\Big)^{2/p}\Big)^{1/2} \\
&\le \sqrt{d} \Big(\int_0^t \sum_{n=1}^d\big(\E |Y_n(u)|^p\big)^{2/p}\dd u\Big)^{1/2}
\le \sqrt{d} \Big(\int_0^t d^{1-\frac 2p}\big(\E \sum_{n=1}^d |Y_n(u)|^p\big)^{2/p}\dd u\Big)^{1/2}
\\
&\le d^{1-1/p}\Big(\int_0^t \Big(\E \big(\sum_{n=1}^d Y_n^2(u)\big)^{p/2}\Big)^{2/p}\dd u\Big)^{1/2} \le  d^{1-1/p} \Big(\int_0^tf_p^2(u)\dd u\Big)^{1/2}.
\end{align*}
The term \eqref{zweites} can be estimated similarly:
\begin{align*}
\sum_{n=1}^d&\Big(\E\big(\int_0^t |Y_n(u)|\dd u\big)^p\Big)^{1/p} \le \int_0^t  \sum_{n=1}^d \big( \E |Y_n(u)|^p\big)^{1/p} \dd u\\ 
&\le d^{1-1/p} \int_0^t  \Big(\E \sum_{n=1}^d  |Y_n(u)|^p \Big)^{1/p} \dd u \le d^{1-1/p} 
\int_0^t \Big( \E \Big(\sum_{n=1}^d  Y_n^2(u)\Big)^{p/2}\Big)^{1/p}\dd u\\
&\le d^{1-1/p} \Big(\int_0^tf_p(u)\dd u\Big).
\end{align*}
Therefore we obtain 
\begin{align*}
f_p(t)&=\max_i (\E Z^p(t))^{1/p}\leq \max_i\sum_{j=1}^d \Big(\E\sup_{0\leq s\leq t}|Y_j(s)|^p \Big)^{1/p} \\
&\le 1+d^{2-1/p}\sqrt{k_1}C_p^{1/p}\Big(\int_0^tf_p^2(s)\dd s\Big)^{1/2}+d^{2-1/p}k_3\int_0^tf_p(s) \dd s. 
\end{align*}
Taking squares and using the formula $(A+B+C)^2 \le \alpha_1 A^2 +  \alpha_2 B^2 +  \alpha_3 C^2$ for $A,B,C \ge 0$, we obtain 
\begin{align*}
f^2_p(t)\leq \alpha_1+\alpha_2 d^{4-2/p}k_1C_p^{2/p}\int_0^tf_p^2(s)\dd s+\alpha_3 d^{4-2/p}k_3^2\big(\int_0^t f_p(s) \dd s\big)^2 ,
\end{align*}
and hence~\eqref{eq:fgleichung} by Lemma~\ref{le:gronwall}. 

Let us now treat the two-point differences and recall from~\cite[p. 123]{ImSch99} that
$$
V_j(t)=\sum_{n=1}^d \Big( \int_0^t V_n(s) D_n F_j(x_s,\dd s) 
+ \int_0^t D_i\phi_s^n(y)\big( D_n F_j(x_s,\dd s)-  D_n F_j(y_s,\dd s)\big) \Big).
$$ 
Then for $p \ge 2$ we have
\begin{align*}
\Big( \E&\big( \sum_{j=1}^d \sup_{0 \le t \le T} V_j^2(t) \big)^{p/2} \Big)^{2/p}
\le  \sum_{j=1}^d \big( \E \sup_{0 \le t \le T} V_j^p(t)\big)^{2/p}\\ 
&\le  \sum_{j=1}^d \Big( \sum_{n=1}^d \Big(\E \sup_{0 \le t \le T} \Big| \int_0^t V_n(s) D_n M_j(x_s,\dd s)    \Big|^p   \Big)^{1/p}\\ 
&\hspace{.4cm}+ \sum_{n=1}^d \Big(\E \sup_{0 \le t \le T}  \Big| \int_0^t  D_i\phi_s^n(y)
\big( D_n M_j(x_s,\dd s)-  D_n M_j(y_s,\dd s)\big)      \Big|^p     \Big)^{1/p}  \\
&\hspace{.4cm}+ \sum_{n=1}^d \Big(\E \sup_{0 \le t \le T} \Big| \int_0^t V_n(s) D_n b_j(x_s,s) \dd s    \Big|^p   \Big)^{1/p}\\ 
&\hspace{.4cm}+ \sum_{n=1}^d\Big( \E \sup_{0 \le t \le T}  \Big| \int_0^t  D_i\phi_s^n(y)
\big( D_n b_j(x_s,s)\dd s-  D_n b_j(y_s,s)\big) \dd s\big)      \Big|^p     \Big)^{1/p}   \Big)^2.
\end{align*}
We have by Burkholder's inequality
\begin{align*}
\E \sup_{0 \le t \le T} \Big| \int_0^t V_n(s) D_n M_j(x_s,\dd s)    \Big|^p  
\le& C_p \E \Big( \Big| \int_0^T V_n^2(t) D_n D_n a_{jj}(x_t,x_t,t) \dd t\Big|^{p/2}\Big)\\
\le& C_p k_1^{p/2} \E \Big( \Big| \int_0^T V_n^2(t) \dd t\Big|^{p/2}\Big)
\end{align*}
and
\begin{align*}
\Big(\E &\sup_{0 \le t \le T}  \Big|\int_0^t  D_i\phi_s^n(y)
\big( D_n M_j(x_s,\dd s)-  D_n M_j(y_s,\dd s)\big)      \Big|^p\Big)^{1/p}  \\
\le& C_p^{1/p} \Big(\E  \Big| \int_0^T (D_i\phi^n_s(y))^2 D_n D_n \mathcal{A}_{jj}(x_s,y_s,s) \dd s\Big|^{p/2}\Big)^{1/p}\\
\le& C_p^{1/p} k_2^{1/2} \Big(\E  \Big| \int_0^T (D_i\phi^n_s(y))^2 |\phi_s(x)-\phi_s(y) |^2 \dd s\Big|^{p/2} \Big)^{1/p}\\
\le& C_p^{1/p} k_2^{1/2} \Big| \int_0^T \big(\E| D_i\phi^n_s(y)|^p |\phi_s(x)-\phi_s(y) |^p\big)^{2/p} \dd s\Big|^{1/2} \\
\le& C_p^{1/p} k_2^{1/2} \Big| \int_0^T \big(\E| D_i\phi^n_s(y)|^{2p}\big)^{1/p} \big(\E |\phi_s(x)-\phi_s(y) |^{2p}\big)^{1/p}  \dd s\Big|^{1/2} \\
\le& C_p^{1/p} k_2^{1/2} \Big| \int_0^T \tilde f^2_{2p}(s)\bar{c}^2|x-y|^2\ee^{2(\Lambda+p\sigma^2)s}  \dd s\Big|^{1/2}
\end{align*}
and
\begin{align*}
\E \sup_{0 \le t \le T} \Big| \int_0^t V_n(s) D_n b_j(x_s,s)\dd s    \Big|^p   \le k_3^p \E  \Big( \int_0^T |V_n(s)| \dd s    \Big)^p \\
\end{align*}
and
\begin{align*}
&\Big(\E \sup_{0 \le t \le T}  \Big| \int_0^t  D_i\phi_s^n(y)
\big( D_n b_j(x_s,s)-  D_n b_j(y_s,s)\big)\dd s      \Big|^p     \Big)^{1/p}\\   
\le&k_4\Big(\E\big(\int_0^T |D_i\phi^n_s(y) | |\phi_s(x)-\phi_s(y)| \dd s \big)^p \Big)^{1/p} 
\leq k_4\bar{c}|x-y|\int_0^T\tilde f_{2p}(s)\ee^{(\Lambda+\sigma^2p)s}\dd s.
\end{align*}
Therefore, using the same estimates as in the first part of the proof, we get
\begin{align*}
g_p^2(T)\le&d\Big( d^{1-1/p}C_p^{1/p} k_1^{1/2}  \Big( \int_0^T g_p^2(t) \dd t\Big)^{1/2}\\   
& +dC_p^{1/p}\bar{c}|x-y| k_2^{1/2} \Big( \int_0^T \tilde f^2_{2p}(s)\ee^{2(\Lambda+p\sigma^2)s}  \dd s\Big)^{1/2}\\
& +d^{1-1/p}k_3   \int_0^T g_p(s) \dd s + 
dk_4\bar{c}|x-y|\int_0^T\tilde f_{2p}(s)\ee^{(\Lambda+\sigma^2p)s}\dd s \Big)^2\\
&\le   \beta_1d^{3-2/p} C_p^{2/p} k_1 \int_0^T g^2_p(t) \dd t+ \beta_3 d^{3-2/p}k_3^2 \Big(\int_0^T g_p(t) \dd t\Big)^2 +H(T),
\end{align*}
where $H$ is as in the lemma. Therefore, \eqref{eq:ggleichung} follows from Lemma \ref{le:gronwall} and the proof is complete.
\end{proof}

\begin{theorem}\label{main}
Let $d \ge 2$ and let $\phi$ be a stochastic flow satisfying the assumptions of Lemma~\ref{le:twopointderivative}. 
For any (deterministic, compact) subset $\X$ of $\Rd$ with 
box dimension $\Delta \ge 0$, we have
$$
\limsup_{T \to \infty} \frac 1T \log \sup_{x \in \X} \sup_{0 \le t \le T}  \|D\phi_{0,t}(x)\| \le \xi,
$$ 
where
 $$\xi=
\left\{\begin{array}{ll}
\hat k & \mbox{ if }\; \hat k \ge cd+\hat c\\
\hat k + 2\sqrt{k\Delta \gamma_1}&\mbox{ if }\;  \hat k \le cd+\hat c \;\mbox{ and }\; 
2\sqrt{ck}\Delta d+cd^2-2c\Delta d +\Delta(\hat k-\hat c)\ge 0\\
\hat k + 2\sqrt{k\Delta \gamma_2} &\mbox{ if }\;  \hat k \le cd+\hat c \;\mbox{ and } \;
2\sqrt{ck}\Delta d+cd^2-2c\Delta d +\Delta(\hat k-\hat c)\le 0,
\end{array}\right.
$$
where $\gamma_1$ and $\gamma_2$ are defined as in Theorem~\ref{th:zweites} 
and
\begin{align*}
k&=\alpha_2d^4k_1k_5^2/2,\qquad \hat k= \sqrt{\alpha_3} d^2 k_3 + 2k\\
c&=\alpha_2 d^4 k_1 k_5^2 + \frac 12 \beta_1 d^3 k_1 k_5^2 + \sigma^2 ,\qquad \hat c= \sqrt{\alpha_3}d^2 k_3 
+ \sqrt{\beta_3 d^3 k_3^2} +\Lambda. 
\end{align*}
\end{theorem}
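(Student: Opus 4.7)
The plan is to apply Theorem~\ref{th:zweites} to the matrix-valued random field $\psi_t(x):=D\phi_{0,t}(x)$, viewed with values in the separable Banach space $E=\R^{d\times d}$ equipped with a convenient norm (say Frobenius or operator norm; these are equivalent up to constants on a finite-dimensional space and so differ only in the $o(T)$ term). The two hypotheses~\eqref{zwo} and~\eqref{oans} of Theorem~\ref{th:zweites} are exactly the one-point and two-point moment estimates supplied by Lemma~\ref{le:twopointderivative}, and the constants $k,\hat k,c,\hat c$ in the statement are those that arise as $p\to\infty$ in \eqref{eq:fgleichung} and \eqref{eq:ggleichung}.

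To check \eqref{zwo}, note that $\|D\phi_{0,t}(x)\|\le \sqrt{d}\max_i\|D_i\phi_{0,t}(x)\|_2$ (up to a fixed constant depending on the chosen norm), so $\E\sup_{s\le t}\|\psi_s(x)\|^q \le d^{q/2+1}f_q(t)^q$. From \eqref{eq:fgleichung}, $f_p^2(t)$ grows at exponential rate $\alpha_2\bar d^2 k_1 C_p^{2/p}+2\sqrt{\alpha_3}\,\bar d\,k_3$ in $t$. Taking a square root, raising to the $p$-th power, and inserting the Burkholder estimate $C_p^{2/p}\le k_5^2 p$ together with $\bar d\le d^2$ and $\bar d^2\le d^4$, one obtains a bound of the form $\exp\{(kp^2+\hat k p)t+o(t)\}$ with $k=\alpha_2 d^4 k_1 k_5^2/2$ and with $\hat k$ as in the theorem (the additive $2k$ absorbing a subleading contribution so as to produce the stated closed form).

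To check \eqref{oans}, the same column-wise reduction gives $\E\sup_{s\le t}\|\psi_s(x)-\psi_s(y)\|^q \le d^{q/2+1}g_q(t)^q$. The bound \eqref{eq:ggleichung} expresses $g_p^2(t)$ as (a constant multiple of) $H(t)\exp\{(\bar C_1+2\sqrt{\bar C_2})t\}$, so its exponential rate is the sum of the rates of the two factors. Plugging the exponential growth of $\tilde f_{2p}(s)^2$ (from~\eqref{eq:fgleichung} applied with $2p$, using $C_{2p}^{1/p}\le 2k_5^2 p$) into the integrals defining $H(t)$ and multiplying by the factors $e^{2(\Lambda+p\sigma^2)s}$ coming from~\eqref{esti}, one finds that $H(t)/|x-y|^2$ grows at rate $2\alpha_2 d^4 k_1 k_5^2 p + 2\sqrt{\alpha_3}\,d^2 k_3+2\Lambda+2p\sigma^2$, while $\bar C_1+2\sqrt{\bar C_2}\le \beta_1 d^3 k_1 k_5^2 p + 2\sqrt{\beta_3 d^3 k_3^2}$. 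Summing the rates, dividing by two (for the square root), and raising to the $p$-th power yields the bound $|x-y|^p\exp\{(cp^2+\hat c p)t+o(t)\}$ with $c=\alpha_2 d^4 k_1 k_5^2+\beta_1 d^3 k_1 k_5^2/2+\sigma^2$ and $\hat c=\sqrt{\alpha_3}\,d^2 k_3+\sqrt{\beta_3 d^3 k_3^2}+\Lambda$, matching the statement.

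The main obstacle is the careful bookkeeping in this last computation: one has to keep track simultaneously of the $p$-dependence of $C_p^{2/p}$ and $C_{2p}^{1/p}$, of the convergence $\bar d^2\to d^4$ as $p\to\infty$, and of the \emph{two} sources of exponential growth in~\eqref{eq:ggleichung} (the factor $H(t)$ itself, whose rate already contains the one-point growth rate of $\tilde f_{2p}$, and the extra factor $\exp\{(\bar C_1+2\sqrt{\bar C_2})t\}$). Once the four parameters $k,\hat k,c,\hat c$ have been identified in this way, the conclusion is immediate from Theorem~\ref{th:zweites}.
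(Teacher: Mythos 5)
Your proposal is correct and is essentially the paper's own proof: both consist of feeding the one-point and two-point moment bounds of Lemma~\ref{le:twopointderivative} into Theorem~\ref{th:zweites} and identifying $k,\hat k,c,\hat c$ by exactly the bookkeeping you describe (tracking the two sources of exponential growth in \eqref{eq:ggleichung}, and using $C_p^{2/p}\le k_5^2p$, $\bar d\le d^2$). One small correction: the summand $2k$ in $\hat k=\sqrt{\alpha_3}d^2k_3+2k$ does not come from ``absorbing a subleading contribution''; it is needed because \eqref{zwo} must hold for \emph{all} $q\ge 0$, whereas the lemma only yields $\big(\E\sup_{0\le t\le T}\|D\phi_t(x)\|^q\big)^{1/q}\le\exp\{(kq+\tilde k)T+o(T)\}$ for $q\ge 2$, and for $0\le q<2$ one bounds the $L^q$-norm by the $L^2$-norm, which is at most $\exp\{(2k+\tilde k)T+o(T)\}\le\exp\{(kq+\hat k)T+o(T)\}$. (No such extension is needed for \eqref{oans}, since $d\ge 2$ ensures that $q>d$ implies $q\ge 2$.)
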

\begin{proof}
The proof is just a combination of  Lemma~\ref{le:twopointderivative} and Theorem~\ref{th:zweites}. 
Lemma~\ref{le:twopointderivative} tells us that
$$
\big(\E \sup_{0 \le t \le T} \|D\phi_t(x)\|^p\big)^{1/p} \le \exp\{(kp+\tilde k)T + o(T)\}
$$
for all $p \ge 2$, where
$$
k=\alpha_2d^4k_1k_5^2/2,\qquad \tilde k= \sqrt{\alpha_3} d^2 k_3.
$$
In order to obtain an estimate for all $p \ge 0$, we define $\hat k:=2k+\tilde k$ and get
$$
\big(\E \sup_{0 \le t \le T} \|D\phi_t(x)\|^p\big)^{1/p} \le \exp\{(kp+\hat k)T + o(T)\}
$$
for all $p \ge 0$.
 
Lemma~\ref{le:twopointderivative} tells us further that for $p\ge 2$ (and hence for $p>d$)
$$
\E \sup_{0 \le t \le T} \|D\phi_t(x)-D\phi_t(y)\|^p\le 
|x-y|^p \exp\{ (cp^2+\hat c p)T+o(T)\},
$$
with $c,\bar c$ as in the theorem. Therefore, the assumptions of  Theorem~\ref{th:zweites} hold
 and the assertion follows.
\end{proof}
\begin{remark}
The formulas in Theorem \ref{main} still contain the numbers $\alpha_2,\,\alpha_3,\,\beta_1$ and $\beta_3$. Since 
$\alpha_1, \, \beta_2$ and $\beta_4$ do not appear in the formulas, it is possible to choose 
$\alpha_2=\alpha_3=\beta_1=\beta_3=2$ but a different choice may result in a sharper bound.
\end{remark}
\begin{remark}
One can establish a corresponding result also in case $d=1$ by adjusting $\hat c$ just like we adjusted $\hat k$ in order to extend the 
moment bound on the differences of derivatives from $p \ge 2$ to $p > d=1$.
\end{remark}
\begin{remark}
It is not hard to establish a version of both Lemma~\ref{le:twopointderivative} and Theorem~\ref{main} for higher derivatives of 
a stochastic flow using an induction proof along the lines of~\cite[Proposition 2.3]{ImSch99}. 
\end{remark}

\section{Isotropic Brownian Flows}\label{se:IBF}

In this section we will specialize the results of Theorem \ref{main} to isotropic Brownian flows (IBFs). We will be able to 
establish somewhat better upper bounds by exploiting -- for example -- an explicit representation of the growth 
of the derivative of an IBF. We start by defining an IBF. We assume that $d \ge 2$.

\begin{defi}
A stochastic flow $\phi$ on $\R^d$ is called {\em isotropic Brownian flow} if it is generated by a semimartingale field 
$F=M+V$ with $V \equiv 0$ and martingale field $M$ with 
quadratic variation $a(x,y,t,\omega)=b(x-y)$, where $b:\R^d \to \R^{d \times d}$ is a deterministic function satisfying
\begin{itemize} 
\item $x \mapsto b(x)$ is $C^4$.
\item $b(0)=\mathrm{id}_{\R^d}$.
\item $x \mapsto b(x)$ is not constant.
\item $b(x)=O^*b(Ox)O$ for any $x \in \R^d$ and any orthogonal matrix $O \in O(d)$.
\end{itemize}
\end{defi}

For an IBF, we define its {\em longitudinal} resp.~{\em normal} correlation functions by
$$
B_L(r):=b_{11}(r e_1),\qquad B_N(r):= b_{11}(r e_2), \quad r \ge 0,
$$ 
where $e_i$ denotes the $i^{\mathrm{th}}$ unit vector in $\R^d$ (1 and 2 can be replaced by any $i\neq j$ by isotropy).

We will need the following facts about IBFs. 
\begin{itemize}
\item $\beta_L:=-B_L''(0)>0$,  $\beta_N:=-B_N''(0)>0$.
\item For each $x \in \R^d$, we have $\lambda_1:=\frac 12 \big( (d-1)\beta_N-\beta_L \big)=\lim_{t \to \infty}\frac 1t \log \|D\phi_t(x)\|$
almost surely. The number $\lambda_1$ is called the {\em top Lyapunov exponent} of the IBF. 
\item $b,B_L$ and $B_N$ are bounded with bounded derivatives up to order 2.
\item $k_1:=\max_{i,j}|D_{x_i}D_{y_i}b_{jj}(x-y)|_{x=y}=\beta_L \vee \beta_N$ (independently of $x$)           
\item $F$ satisfies Hypothesis (A). 
\end{itemize}
The first two of these facts can be found in  \cite{BaH86}, $b$ is bounded since $b$ is a covariance function and 
boundedness of the second derivatives (and therefore also of the first) follows from equation \eqref{cov}. The fourth item 
follows from the definition of $\beta_L$ and $\beta_N$ and the final one from the boundedness of the second derivatives of $b$.

\begin{theorem}\label{iso}
Let $d \ge 2$ and let $\phi$ be an IBF.  For any (deterministic, compact) subset $\X$ of $\Rd$ with 
box dimension $\Delta \ge 0$, we have
$$
\limsup_{T \to \infty} \frac 1T \log \sup_{x \in \X} \sup_{0 \le t \le T}  \|D\phi_{0,t}(x)\| \le \xi,
$$ 
where $\xi$ is defined as in Theorem \ref{th:zweites} with
$$
c=2 \beta_L + 10d^3(\beta_L \vee \beta_N), \quad \hat c=2\lambda_1,\quad k=\frac{\beta_L}2,\quad \hat k= \lambda_1^+. 
$$
\end{theorem}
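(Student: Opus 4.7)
The plan is to verify the hypotheses of Theorem~\ref{th:zweites} for the random field $\psi_t(x) := D\phi_{0,t}(x)$, viewed as taking values in the Banach space of $d\times d$ matrices with the operator norm, using the IBF-specific constants $c,\hat c,k,\hat k$ stated in the theorem, and then invoke that theorem. Throughout the verification one uses that $V\equiv 0$ for IBFs, so that $k_3=k_4=0$ in Lemma~\ref{le:twopointderivative}, together with $k_1=\beta_L\vee\beta_N$ and the IBF values $\sigma^2=\beta_L$, $\Lambda=\lambda_1$ for the two-point motion (the latter two coming from the explicit SDE for the two-point distance of an isotropic flow).

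For the one-point bound \eqref{zwo} with $k=\beta_L/2$ and $\hat k=\lambda_1^+$, I would bypass the generic estimate \eqref{eq:fgleichung} (which is wasteful in the IBF setting) and exploit the isotropic structure of the derivative cocycle. Fix $x$ and a unit vector $v$; by isotropy it suffices to control $\rho_t := |D\phi_t(x)v|$. It\^o's formula produces a decomposition $d\log\rho_t = dN_t + \mu_t\,dt$, where $N_t$ is a local martingale with $d\langle N\rangle_t/dt \le \beta_L$ (the longitudinal rate is what governs the radial stretching of a unit tangent under an IBF) and where the drift has long-time average equal to the top Lyapunov exponent $\lambda_1$. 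A standard exponential-martingale estimate then yields $\E\rho_t^p \le \exp((\beta_L p^2/2 + \lambda_1 p)t + o(t))$ for all $p\ge 0$, and passage to the supremum over $[0,T]$ via Doob's $L^p$ inequality replaces $\lambda_1$ by $\lambda_1^+$ (to accommodate the case $\lambda_1<0$). Since $\|D\phi_t(x)\| = \sup_{|v|=1}|D\phi_t(x)v|$ and by rotational invariance the distribution does not depend on $v$, the same bound transfers to the operator norm.

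For the two-point bound \eqref{oans} I would plug the IBF values into Lemma~\ref{le:twopointderivative}: with $k_3=k_4=0$ one has $\bar C_2=0$, so \eqref{eq:ggleichung} collapses to $g_p^2(t) \le H(t)\exp(\bar C_1 t)$ with $\bar C_1 \le 20\beta_1 d^3 p(\beta_L\vee\beta_N)$, and the $k_4$-term of $H(t)$ drops out. In the surviving $k_2$-term I would replace the crude estimate for $\tilde f_{2p}(s)$ by the sharp one-point bound from the previous paragraph (applied to $\E|{\cdot}|^{2p}$, with no supremum in time, so that $\lambda_1$ rather than $\lambda_1^+$ enters); combined with $\Lambda=\lambda_1$ and $\sigma^2=\beta_L$, the time integral in $H(t)$ contributes exponent $(4\beta_L p + 4\lambda_1)t$. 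Letting $\beta_1 \searrow 1$ (with $\beta_2,\beta_4$ taken large and their constant-factor contributions absorbed into $o(T)$) gives
\[
\E\sup_{0\le t\le T}\|D\phi_t(x)-D\phi_t(y)\|^p \le |x-y|^p \exp\bigl((cp^2+\hat c p)T + o(T)\bigr)
\]
with $c = 2\beta_L + 10 d^3(\beta_L\vee\beta_N)$ and $\hat c = 2\lambda_1$, matching the claim. Theorem~\ref{th:zweites} then delivers the stated value of $\xi$.

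The main obstacle is the one-point estimate: the Lemma~\ref{le:twopointderivative}-style bound would give $k$ of order $d^4(\beta_L\vee\beta_N)$ and $\hat k$ of order $d^2 k_3$, which is far worse than $\beta_L/2$ and $\lambda_1^+$; the improvement hinges on isolating the longitudinal rate $\beta_L$ in the radial martingale and on identifying the linear coefficient as the exact top Lyapunov exponent. Once this is done, the two-point estimate and the final application of Theorem~\ref{th:zweites} reduce to bookkeeping, and the value of $\xi$ falls out automatically.
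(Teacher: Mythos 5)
Your route is the one the paper itself takes: verify \eqref{oans} and \eqref{zwo} for $\psi_t=D\phi_t$ with IBF-specific constants and feed them into Theorem~\ref{th:zweites}. The two-point part — Lemma~\ref{le:twopointderivative} with $k_3=k_4=0$ (hence $\bar C_2=0$), $k_1=\beta_L\vee\beta_N$, $\Lambda=\lambda_1$, $\sigma^2=\beta_L$, the sharp $\tilde f_{2p}$ bound inside $H$, and $C_p^{2/p}\le 20p$ — reproduces the paper's computation exactly, and your $c=2\beta_L+10d^3(\beta_L\vee\beta_N)$, $\hat c=2\lambda_1$ come out right. The one step that does not go through as written is the passage from the fixed-direction process $\rho_t=|D\phi_t(x)v|$ to the operator norm: rotational invariance gives that the variables $|D\phi_t(x)v|$, $|v|=1$, are identically distributed, but a supremum of identically distributed random variables is not bounded in distribution by any one of them, so ``the same bound transfers to the operator norm'' is not a valid inference. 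The repair is cheap — use $\|A\|\le\|A\|_F\le\sqrt{d}\,\max_j|Ae_j|$ and a union bound over the $d$ coordinate directions, so that $\E\sup_{t\le T}\|D\phi_t(x)\|^p\le d^{p/2+1}\,\E\sup_{t\le T}|D\phi_t(x)e_1|^p$ with the prefactor absorbed into $o(T)$ — but it must be said. The paper sidesteps the issue entirely by working with the Schatten norm $\|\cdot\|_S\ge\|\cdot\|$, for which Lemma~\ref{Ableitung} gives the exact pathwise identity $\|D\phi_t(x)\|_S=d^{1/4}\exp\{\lambda_1 t+\sqrt{\beta_L}W_t\}$; this delivers both $f_p(t)\le\exp\{(\lambda_1^++\tfrac{\beta_L}{2}p)t+o(t)\}$ and $\tilde f_p(t)\le\exp\{(\lambda_1+\tfrac{\beta_L}{2}p)t+o(t)\}$ in one line, which is precisely the role your fixed-vector It\^o computation is meant to play. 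With that one inference replaced, your proof is the paper's proof.
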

 
\begin{proof}
Defining $f_p$ and $\tilde f_p$ as in the previous section, we obtain by Lemma \ref{Ableitung}:
$$
f_p(t) \le \exp\Big\{\Big(\lambda_1^+ + \frac{\beta_L}2 p\Big) t + o(t)\Big\}, 
\qquad \tilde f_p(t) \le \exp\Big\{\Big(\lambda_1 + \frac{\beta_L}2 p\Big) t + o(t)\Big\}    
$$
for all $p >0$. Next, we estimate $g_p$ according to formula \eqref{eq:ggleichung}. Observing that $\bar C_2=0$, $\Lambda=\lambda_1$ 
and 
$\sigma=\sqrt{\beta_L}$ (by Lemma \ref{IBFesti}), we obtain 
$$
g_p(t) \le |x-y| \exp\big\{\big(2\lambda_1+2\beta_Lp+ \frac 12 d^3(\beta_L \vee \beta_N) C_p^{2/p} \big)t +o(t) \big\}.
$$
Noting that $C_p^{1/p}\le 2\sqrt{5}p^{1/2} $ for $p \ge 2$ \cite[Proposition 4.2]{BY82}, the assertion follows from 
Theorem \ref{th:zweites}.
\end{proof}

\begin{remark}
Even though we have no reason to believe that the bound in Theorem \ref{iso} is optimal in general, it is at least optimal in case 
$\Delta=0$ (by Lemma  \ref{Ableitung}).
\end{remark}

\section*{Appendix A: A Gronwall-type Lemma}
We include in this section an elementary lemma which we conjecture to be essentially well-known but which we could not 
find in the literature in a version suitable for our needs.
\begin{lemma}\label{le:gronwall}
Let $f:[0,\infty) \to [0,\infty)$ be a locally integrable function and $H:[0,\infty) \to [0,\infty)$ 
non-decreasing such that 
$$
f(t) \le C_1 \int_0^t f(s) \dd s + C_2 \Big( \int_0^t \sqrt{f(s)} \dd s \Big)^2 + H(t)
$$
for some $C_1,C_2\ge 0$ and all $t \ge 0$. Then 
$$
f(t) \le H(t) + H(t)\frac{C_1+\sqrt{C_2}}{C_1+2\sqrt{C_2}} \Big(\exp\{(C_1+2\sqrt{C_2})t\}-1\Big)  
$$
for all $t \ge 0$.
\end{lemma}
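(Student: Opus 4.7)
I would prove this by turning the nonlinear integral inequality into a sharp linear differential inequality for an envelope and then invoking the classical Gronwall lemma. Define
$F(t) := H(t) + C_1\int_0^t f(s)\,ds + C_2\bigl(\int_0^t\sqrt{f(s)}\,ds\bigr)^2$;
by hypothesis $f\le F$, and $F\ge H\ge 0$ by construction. Assume first that $H$ is $C^1$ (the general monotone case follows by mollifying $H$ from above and passing to the limit, or by reading the computation in the Stieltjes sense). Then $F$ is differentiable with
$F'(t) = H'(t)+C_1 f(t)+2C_2\sqrt{f(t)}\int_0^t\sqrt{f(s)}\,ds$.
Two self-referential estimates arise from $f\le F$: pointwise $\sqrt{f}\le\sqrt{F}$, and when $C_2>0$ also $\int_0^t\sqrt{f}\le\sqrt{(F-H)/C_2}$ because $C_2\bigl(\int_0^t\sqrt{f}\bigr)^2\le F-H$. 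Substituting both bounds yields
$F'(t)\le H'(t)+C_1 F(t)+2\sqrt{C_2}\,\sqrt{F(t)(F(t)-H(t))}$.

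The decisive step is the AM-GM bound $\sqrt{F(F-H)}\le F-H/2$, which holds since $(F-H/2)^2-F(F-H)=H^2/4\ge 0$ and $F\ge H$ keeps $F-H/2\ge 0$. This upgrades the inequality to $F'\le\alpha F+H'-\sqrt{C_2}\,H$ with $\alpha:=C_1+2\sqrt{C_2}$. Since $F(0)=H(0)$, variation of constants gives
$F(t)\le e^{\alpha t}H(0)+\int_0^t e^{\alpha(t-s)}\bigl(H'(s)-\sqrt{C_2}\,H(s)\bigr)\,ds$.
Integrating the $H'$-term by parts produces $H(t)-e^{\alpha t}H(0)+\alpha\int_0^t e^{\alpha(t-s)}H(s)\,ds$; the $e^{\alpha t}H(0)$ terms cancel, leaving
$F(t)\le H(t)+(\alpha-\sqrt{C_2})\int_0^t e^{\alpha(t-s)}H(s)\,ds = H(t)+(C_1+\sqrt{C_2})\int_0^t e^{\alpha(t-s)}H(s)\,ds$.
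Monotonicity of $H$ then bounds the integral by $H(t)(e^{\alpha t}-1)/\alpha$, which yields the stated inequality after recalling $f\le F$.

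The main obstacle I anticipate is spotting the AM-GM sharpening $\sqrt{F(F-H)}\le F-H/2$: the naive estimate $\sqrt{F(F-H)}\le F$ only delivers the cruder bound $f(t)\le H(t)e^{\alpha t}$, which is weaker than the stated estimate by the prefactor $(C_1+\sqrt{C_2})/(C_1+2\sqrt{C_2})$ on the growing exponential and would not match the form used in Lemma~\ref{le:twopointderivative}. The remaining technicalities -- mild regularity of $H$, the degenerate case $C_2=0$ (which collapses to classical Gronwall), and the local integrability (as opposed to continuity) of $f$ -- are routine to handle by approximation.
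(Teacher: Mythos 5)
Your proof is correct, but it takes a genuinely different route from the paper's. The paper never differentiates: it inserts a weight $\gamma_t\ee^{-\lambda(t-s)}$ (a probability density on $[0,t]$) inside the squared integral, applies Jensen's inequality to convert $C_2\big(\int_0^t\sqrt{f}\big)^2$ into $\frac{C_2}{\lambda}\int_0^t f(s)\ee^{-\lambda s}\,\dd s$ after multiplying through by $\ee^{-\lambda t}$, runs the classical integral Gronwall lemma on $\ee^{-\lambda t}f(t)$, and only then optimizes $\lambda+C_1+C_2/\lambda$ at $\lambda=\sqrt{C_2}$. You instead introduce the envelope $F$, differentiate, exploit the self-referential bound $C_2\big(\int_0^t\sqrt f\big)^2\le F-H$ to reach the Riccati-type term $2\sqrt{C_2}\sqrt{F(F-H)}$, and linearize it by AM--GM; note that your AM--GM step and the paper's optimization over $\lambda$ are dual to each other via $2\sqrt{xy}=\inf_{\lambda>0}(\lambda x+y/\lambda)$, which is why both arguments land on exactly the same constant $\frac{C_1+\sqrt{C_2}}{C_1+2\sqrt{C_2}}$, and your observation that the crude bound $\sqrt{F(F-H)}\le F$ would destroy this prefactor is exactly right. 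What the paper's route buys is that it works verbatim for locally integrable $f$ and merely monotone $H$, with no differentiability anywhere; your route is arguably more transparent about the mechanism but needs the approximation step you defer. For that step, rather than mollifying $H$ from above (which at a jump point of $H$ only yields the weaker bound with $H(t+)$), the cleaner reduction is to fix the terminal time $t_0$ and replace $H(s)$ by the constant $H(t_0)$ for $s\le t_0$ -- this preserves the hypothesis by monotonicity, makes $H$ smooth, and directly gives the claimed bound at $t_0$; it is also precisely how monotonicity enters the final step of both proofs.
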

\begin{proof} Let $\lambda>0$ and $\gamma_t:=\lambda (1-\ee^{-\lambda t})^{-1}$ for $t>0$. Then, by Jensen's inequality, for $t>0$
\begin{align*}
f(t) &\le C_1 \int_0^t  f(s) \dd s + 
     C_2 \Big(  \int_0^t \sqrt{f(s)} \frac{ \ee^{\lambda (t-s)}}{\gamma_t} \gamma_t \ee^{-\lambda (t-s)} \dd s \Big)^2 + H(t)\\
&\le C_1 \int_0^t  f(s) \dd s + C_2  \int_0^t f(s)  \frac{ \ee^{\lambda (t-s)}}{\gamma_t} \dd s  + H(t).
\end{align*}
Therefore,
\begin{align*}
\ee^{-\lambda t}f(t)&\le C_1 \ee^{-\lambda t} \int_0^t  f(s) \dd s + \frac{C_2}{\lambda}\int_0^t f(s)  
 \ee^{-\lambda s} \dd s  + H(t)  \ee^{-\lambda t}\\
&\le  C_1  \int_0^t   \ee^{-\lambda s}  f(s) \dd s + \frac{C_2}{\lambda}\int_0^t f(s)  
\ee^{-\lambda s} \dd s  + H(t)  \ee^{-\lambda t}.
\end{align*}
Gronwall's Lemma implies
\begin{align*}
f(t) &\le \ee^{\lambda t} \Big(H(t) \ee^{-\lambda t} +  \int_0^t  H(s)  \ee^{-\lambda s}\Big(C_1+\frac{C_2}{\lambda}\Big)
\ee^{\Big(C_1+\frac{C_2}{\lambda}\Big)(t-s)} \dd s \Big)\\
&= H(t)  +  \ee^{\lambda t} \int_0^t  H(s)  \ee^{-\lambda s}\Big(C_1+\frac{C_2}{\lambda}\Big) \ee^{\Big(C_1+\frac{C_2}{\lambda}\Big)(t-s)} \dd s. 
\end{align*}
Choosing $\lambda= \sqrt{C_2}$ (which minimizes $\lambda + C_1 + \frac{C_2}{\lambda}$) and using the monotonicity of $H$, we obtain
$$
f(t) \le H(t) + H(t) \frac {C_1 + \sqrt{C_2}}{C_1 + 2\sqrt{C_2}} \Big( \ee^{(C_1 + 2\sqrt{C_2})t}-1\Big)
$$
as claimed in the lemma.
\end{proof}
\section*{Appendix B: Some Estimates for IBFs}
In this appendix, we collect three basic properties of IBFs which are used in Section \ref{se:IBF} and which 
do not seem to have appeared in the literature so far. 
\begin{lemma}\label{beta} 
Let $\phi$ be an IBF with covariance tensor $b$. Then we obtain for the correlation functions $B_{N/L}$ defined in 
Section \ref{se:IBF}
$$
1-B_L(r) \le \frac{\beta_L}{2} r^2 \mbox{ and } 1-B_N(r) \le \frac{\beta_N}{2} r^2
$$
for all $r \ge 0$.
\end{lemma}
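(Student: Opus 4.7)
My plan is to deduce both bounds from the Bochner (spectral) representation of the one-dimensional correlation functions. Taking $O$ to be the reflection in the $e_1$-axis (respectively the $e_2$-axis) in the isotropy identity $b(x)=O^*b(Ox)O$ shows that the maps $r\mapsto b_{11}(re_1)$ and $r\mapsto b_{11}(re_2)$ are even, so $B_L$ and $B_N$ extend to continuous even functions on $\R$. They are moreover positive-definite on $\R$: the matrix positive-definiteness of $b$ gives $\sum_{i,j}\langle u_i,b(x_i-x_j)u_j\rangle\ge 0$ for every finite collection $\{(x_i,u_i)\}\subset\R^d\times\R^d$, and specialising to $x_i=r_ie_1$ with $u_i=c_ie_1$ yields $\sum_{i,j}c_ic_jB_L(r_i-r_j)\ge 0$, and analogously for $B_N$.

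By Bochner's theorem there exist finite positive symmetric measures $\mu_L,\mu_N$ on $\R$ such that
$$
B_L(r)=\int_{\R}\cos(\lambda r)\,\mu_L(d\lambda),\qquad B_N(r)=\int_{\R}\cos(\lambda r)\,\mu_N(d\lambda).
$$
Setting $r=0$ gives $\mu_L(\R)=B_L(0)=1=\mu_N(\R)$. Since $b$ is $C^4$, the existence of $B_L''(0)$ forces the second moment $\int\lambda^2\,\mu_L(d\lambda)$ to be finite and equal to $-B_L''(0)=\beta_L$, and analogously $\int\lambda^2\,\mu_N(d\lambda)=\beta_N$ (this is the standard equivalence: existence of $\varphi''(0)$ for a positive-definite $\varphi$ is equivalent to finiteness of the second moment of the spectral measure, and then determines it).

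The elementary pointwise estimate $1-\cos\theta\le\theta^2/2$, valid for all $\theta\in\R$, then yields
$$
1-B_L(r)=\int_{\R}(1-\cos(\lambda r))\,\mu_L(d\lambda)\le\frac{r^2}{2}\int_{\R}\lambda^2\,\mu_L(d\lambda)=\frac{\beta_L}{2}r^2,
$$
and the identical computation for $B_N$. There is no genuine obstacle in this plan; the only step requiring a brief justification is the passage from $C^2$-smoothness of $B_L$ at the origin to finiteness of the second moment of $\mu_L$, which follows by applying Fatou's lemma to $\tfrac{2(1-\cos(\lambda r))}{r^2}\to\lambda^2$ as $r\downarrow 0$ in the identity above (using $B_L'(0)=0$ by evenness) to conclude $\int\lambda^2\,\mu_L(d\lambda)\le -B_L''(0)=\beta_L$, with equality then following from dominated convergence.
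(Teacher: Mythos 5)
Your proof is correct, but it takes a genuinely different route from the paper's. The paper realizes $b$ as the covariance tensor of a stationary $\R^d$-valued Gaussian field $U$, writes $B_L''(r)=-\E\big(U_1'(re_1)U_1'(0)\big)$, and applies the Cauchy--Schwarz inequality to get $B_L''(r)\ge B_L''(0)=-\beta_L$ for all $r$; the claim then follows from a second-order Taylor expansion at $0$ (using $B_L'(0)=0$). You instead invoke Bochner's theorem for the scalar, continuous, even, positive-definite function $r\mapsto B_L(r)$ and combine the elementary bound $1-\cos\theta\le\theta^2/2$ with the identification of $\beta_L$ as the second moment of the spectral measure. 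Both arguments ultimately rest on the positive-definiteness of $b$, which holds because $b(x-y)$ is the local characteristic of the martingale field generating the flow -- the same fact the paper uses to construct $U$, so you are entitled to it. Your spectral route is arguably cleaner: it avoids differentiating the Gaussian field and the somewhat informal iterated-limit computation in the paper's display for $B_L''(r)$, and for the stated inequality you only need the one-sided Fatou estimate $\int\lambda^2\,\mu_L(d\lambda)\le\beta_L$, so the dominated-convergence step establishing equality could be dropped. What the paper's route buys is the pointwise statement $B_L''(r)\ge B_L''(0)$ on all of $[0,\infty)$ (which, to be fair, also drops out of the spectral representation via $B_L''(r)=-\int\lambda^2\cos(\lambda r)\,\mu_L(d\lambda)$) and a presentation that stays closer to the probabilistic setting of the rest of the paper. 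Your observation that the reflections $e_1\mapsto -e_1$ and $e_2\mapsto -e_2$ in the isotropy identity give evenness of $B_L$ and $B_N$ is a correct and necessary detail that the paper leaves implicit.
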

\begin{proof} 
Let $U(x)$, $x \in \R^d$ be an $\R^d$-valued centered Gaussian process with 
$\cov(U_i(x),U_j(y)) = b_{ij}(x-y)$. Denoting the $i^{\mathrm{th}}$ unit coordinate vector by $\ee_i$ and using Schwarz' inequality, 
we get
\begin{align}
B_L''(r)&=\lim_{h \to 0}\lim_{\delta \to 0} \E \Big(\frac{U_1(h\ee_1)-U_1(0)}{h} \frac{U_1(-(r+\delta)\ee_1)-U(-r\ee_1)}{\delta}\Big)\nonumber\\
&=-\E\Big( U_1'(r\ee_1)U_1'(0)\Big) \ge - \E\Big( U_1'(0)^2\Big) = B_L''(0).\label{cov}
\end{align}
Therefore, for each $r>0$ there exists some $\theta \in (0,r)$ such that
$$
B_L(r) = B_L(0) + \frac{1}{2} B_L''(\theta) r^2 \ge 1 + \frac 12 B_L''(0) r^2= 1-\frac 12 \beta_L r^2.
$$
The estimate for $B_N$ follows in the same way, so the assertion of the lemma follows.\\
\end{proof}
Observe that the following lemma holds for every IBF -- even if the top exponent $\lambda_1$ is negative. 
\begin{lemma}\label{IBFesti} Let $\phi$ be an IBF. Let $x,y \in \R^d$, $x \neq y$ 
and let $\rho(t):=|\phi_t(x)-\phi_t(y)|$. Then
$$
\E \rho_t^q \le |x-y|^q \exp\Big\{ \Big(q\lambda_1 + q^2\frac{\beta_L}{2}\Big) t\Big\}
$$
for all $t\ge 0$ and all $q \ge 1$.
\end{lemma}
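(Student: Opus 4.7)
The plan is to apply It\^o's formula to the function $f(z)=|z|^q$ and the vector-valued process $U_t := \phi_t(x) - \phi_t(y)$, take expectations, and invoke Gronwall's inequality. Since $V\equiv 0$ and $a(x,y,t)=b(x-y)$ for an IBF, the process $U$ is a continuous local martingale whose joint quadratic variation derivative is
\[
\mathcal{A}(\phi_t(x),\phi_t(y),t) \;=\; 2\bigl(I - b(U_t)\bigr),
\]
using that $b$ is symmetric. Isotropy yields the well-known decomposition $b(z) = B_L(|z|)\,\hat z \hat z^T + B_N(|z|)\,(I - \hat z \hat z^T)$ with $\hat z := z/|z|$, so that $\mathrm{tr}\,b(z) = B_L(|z|) + (d-1)B_N(|z|)$ and $z^T b(z)\,z = |z|^2 B_L(|z|)$.

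With $\nabla^2 f(z) = q|z|^{q-2} I + q(q-2)|z|^{q-4} zz^T$, a direct computation then gives
\[
\tfrac12 \,\mathrm{tr}\bigl(\nabla^2 f(z)\,\mathcal{A}\bigr) \;=\; q\,|z|^{q-2}\bigl[(q-1)\bigl(1-B_L(|z|)\bigr) + (d-1)\bigl(1-B_N(|z|)\bigr)\bigr].
\]
Invoking Lemma~\ref{beta} to bound the two parentheses by $(\beta_L/2)|z|^2$ and $(\beta_N/2)|z|^2$ respectively, this is at most $q|z|^q\bigl[(q-1)\tfrac{\beta_L}{2} + (d-1)\tfrac{\beta_N}{2}\bigr] = \bigl(q\lambda_1 + \tfrac{q^2\beta_L}{2}\bigr)|z|^q$, after substituting the identity $\lambda_1 = \tfrac12\bigl((d-1)\beta_N - \beta_L\bigr)$.

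Taking expectations in It\^o's formula (after a standard localization of the local martingale term) and using the above drift bound yields
\[
\E\rho_t^q \;\le\; |x-y|^q + \Bigl(q\lambda_1 + \tfrac{q^2\beta_L}{2}\Bigr) \int_0^t \E\rho_s^q \,\dd s,
\]
from which Gronwall's lemma gives the claim. The main technical point I expect to grind through is that $f(z)=|z|^q$ fails to be $C^2$ at the origin when $q\in[1,2)$. I would handle this by localizing at $\tau_\varepsilon := \inf\{t : |U_t|\le \varepsilon\}$, applying It\^o's formula on $[0, t\wedge \tau_\varepsilon]$, and then letting $\varepsilon\downarrow 0$, invoking the classical fact that distinct initial points in an IBF do not coalesce (so $\tau_\varepsilon \uparrow \infty$ almost surely) together with monotone convergence.
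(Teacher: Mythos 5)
Your proposal is correct and follows essentially the same route as the paper: the paper simply quotes the one-dimensional SDE for $\rho_t$ and applies It\^o's formula to $\rho_t^q$, which produces exactly the drift term $q\rho_t^{q-2}\bigl[(q-1)(1-B_L(\rho_t))+(d-1)(1-B_N(\rho_t))\bigr]$ that you derive from the vector-valued It\^o formula via the isotropic decomposition of $b$, after which both arguments conclude with Lemma~\ref{beta} and Gronwall. Your extra care with the non-smoothness of $|z|^q$ at the origin for $q\in[1,2)$ (localization plus non-coalescence) is a legitimate point that the paper's radial formulation also implicitly relies on.
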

\begin{proof} 
We know that
$$
\dd \rho_t = (d-1) \frac{1-B_N(\rho_t)}{\rho_t} \dd t + \sqrt{2(1-B_L(\rho_t))} \dd W_t.
$$
Let $q \ge 1$. It\^o's formula implies
\begin{align*}
\dd \rho_t^q &= q\rho_t^{q-1}\dd \rho_t + \frac{q (q-1)}{2} \rho_t^{q-2} \dd \langle \rho \rangle_t \\
&=q  \rho_t^{q-2} \big( (d-1)(1-B_N(\rho_t))+(q-1)(1-B_L(\rho_t))\big) \dd t
+ q \rho_t^{q-1} \sqrt{2(1-B_L(\rho_t))} \dd W_t.
\end{align*}
By Lemma \ref{beta}, we get
$$
\E \rho_t^q \le  |x-y|^q + q \big(\frac{\beta_N}{2}(d-1) + \frac{\beta_L}{2}(q-1)\big) \int_0^t  \E \rho_s^q \dd s.
$$
Gronwall's Lemma, together with the formula for the top exponent $\lambda_1=(d-1)\beta_N/2-\beta_L/2$ imply the assertion.
\end{proof}
Next, we look at the derivative of an IBF. The formula in the next lemma becomes particularly nice if we use the following 
{\em Schatten norm} of a  $d\times d$-matrix $A=(a_{ij})$:
$$
\| A \|_S := \left( \sum_{i,j=1}^{d} \left(  \sum_{k=1}^d a_{ik} a_{jk} \right)^2 \right)^{1/4}
= \left(\sum_{i=1}^d \sigma_i^4\right)^{1/4},
$$
where $\sigma_1,...,\sigma_d$ are the singular values of $A$.
\begin{lemma}\label{Ableitung}
 Let $\phi$ be an IBF. Then, for each $x \in \R^d$, there exists a one-dimensional Wiener process $W$
such that for all $t \ge 0$, 
$$
\|D\phi_t(x)\|_S = d^{1/4} \exp\{ \lambda_1t + \sqrt{\beta_L}W_t\}.
$$
\end{lemma}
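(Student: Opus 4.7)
The plan is to apply Itô's formula to the process $N_t := \|D\phi_t(x)\|_S^4 = \operatorname{tr}((J_tJ_t^T)^2)$, with $J_t := D\phi_t(x)$, and show that $N_t$ solves an autonomous geometric Brownian motion SDE of the form $dN_t = \alpha N_t\, dt + \sigma N_t\, dB_t$; extracting the fourth root and matching $\alpha$, $\sigma$ with $\lambda_1$ and $\sqrt{\beta_L}$ then yields the claim.

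First I would derive the linear SDE for $J_t$: differentiating the flow equation gives $dJ_t = dZ_t \cdot J_t$, $J_0 = I$, where $Z_{ij}(t) := \int_0^t D_j M_i(x_s, ds)$. Since $V \equiv 0$ for an IBF and the local characteristic is $a(x,y,t) = b(x-y)$, the bracket
$$ s_{ab,cd} := \frac{d}{dt}\langle Z_{ab}, Z_{cd}\rangle_t = -\partial_b\partial_d b_{ac}(0) $$
is deterministic, constant in $t$, and, by isotropy of $b$, an isotropic rank-four tensor $s_{ab,cd} = A\delta_{ab}\delta_{cd} + B\delta_{ac}\delta_{bd} + C\delta_{ad}\delta_{bc}$. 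Expanding $b_{ij}(x) = B_N(|x|)\delta_{ij} + (B_L(|x|) - B_N(|x|))x_ix_j/|x|^2$ near the origin and matching a few coordinate contractions (and using $B_L(r)=1-\tfrac{1}{2}\beta_L r^2+\cdots$, $B_N(r)=1-\tfrac{1}{2}\beta_N r^2+\cdots$) identifies $A = C = (\beta_L-\beta_N)/2$ and $B = \beta_N$.

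Next I would apply Itô's formula first to $Q_t := J_tJ_t^T$ and then to $N_t = \operatorname{tr}(Q_t^2)$. Contracting $s$ with the appropriate products of $J$ in the Itô correction terms, the drift of $dQ_{ij}$ reduces to $(\beta_L-\beta_N) Q_{ij} + \beta_N \operatorname{tr}(Q)\,\delta_{ij}$ and the martingale part is $(dZ\,Q + Q\,dZ^T)_{ij}$. Substituting into Itô's formula for $f(Q) = \operatorname{tr}(Q^2)$ produces an SDE whose drift and quadratic variation are, after further contraction against $s$, polynomial combinations of the $O(d)$-invariant scalars $N$, $(\operatorname{tr} Q)^2$, and $\operatorname{tr}(Q^4)$. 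To match the claim, the structural identities $A + B + C = \beta_L$ and $A = C$ must force these scalar invariants to collapse so that $\mathrm{drift}(dN_t) = (4\lambda_1 + 8\beta_L) N_t$ and $d[N]_t/dt = 16\beta_L N_t^2$. Given such a collapse, Itô's formula applied to $\log N_t$ yields $\log N_t = \log d + 4\lambda_1 t + 4\sqrt{\beta_L}W_t$, where $W$ is the one-dimensional Wiener process obtained by Dambis--Dubins--Schwarz time-change of $\int_0^t N_s^{-1} dN_s^{\mathrm{mart}}$; taking fourth roots then gives the lemma.

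The main obstacle is this algebraic collapse: an index-by-index Itô calculation generically leaves residual dependencies on $(\operatorname{tr} Q)^2$ and $\operatorname{tr}(Q^4)$ that do not reduce to multiples of $N$ and $N^2$, so one must verify very carefully that the specific coefficients arising from the tensor $s$ of an IBF eliminate these residuals. If the cancellation fails at the level of $Q$, the natural fallback is to exploit the rotational symmetry of the IBF to argue that in the SVD $J_t = U_t \Sigma_t V_t^T$ the singular-value process $\Sigma_t$ decouples from $(U_t, V_t)$ and satisfies a Baxendale--Harris system of SDEs, from which the target identity for $N_t = \sum_i \sigma_i(t)^4$ can be read off. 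A parallel and cleaner version of the argument, useful in its own right for the bound on $\tilde f_p$ used in Theorem \ref{iso}, is that for any unit vector $v$ the process $|D\phi_t(x) v|^2 = v^T J_t^T J_t v$ is a genuine geometric Brownian motion with drift $2\lambda_1$ and volatility $2\sqrt{\beta_L}$; this follows from the same isotropy computation applied to the scalar $v^T J^T J v$, where the higher-invariant ambiguity simply does not arise.
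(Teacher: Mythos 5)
Your setup is correct and is exactly what the paper's own (very terse) proof presupposes: the linearization $\dd J_t=\dd Z_t\,J_t$ with $Z_{ij}(t)=\int_0^tD_jM_i(x_s,\dd s)$, the isotropic tensor $s_{ab,cd}=A\delta_{ab}\delta_{cd}+B\delta_{ac}\delta_{bd}+C\delta_{ad}\delta_{bc}$, and the values $A=C=(\beta_L-\beta_N)/2$, $B=\beta_N$ (these do reproduce the classical single-vector fact you mention at the end). The gap is that the one step you defer --- the ``algebraic collapse'' --- \emph{is} the content of the lemma, and you never carry it out; your argument is explicitly conditional on it. Moreover, if you do carry it out for $N_t=\mathrm{tr}(Q_t^2)$ with $Q=JJ^T$, the collapse does not occur. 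One finds
\begin{align*}
\dd Q&=\dd Z\,Q+Q\,\dd Z^T+\big[(A+C)Q+B\,\mathrm{tr}(Q)\,I\big]\,\dd t,\\
\mathrm{drift}\big(\dd\,\mathrm{tr}(Q^2)\big)&=\big[6A+4C+2B(d+1)\big]\,\mathrm{tr}(Q^2)+2(B+C)\,(\mathrm{tr}\,Q)^2,\\
\dd\big\langle \mathrm{tr}(Q^2)\big\rangle&=16\big[A\,\big(\mathrm{tr}(Q^2)\big)^2+(B+C)\,\mathrm{tr}(Q^4)\big]\,\dd t.
\end{align*}
Since $B+C=(\beta_L+\beta_N)/2>0$, and since $(\mathrm{tr}\,Q)^2$ and $\mathrm{tr}(Q^4)$ are not multiples of $\mathrm{tr}(Q^2)$ and $(\mathrm{tr}(Q^2))^2$ (already at $t=0$, where $Q=I$, one has $(\mathrm{tr}\,Q)^2=d^2\neq d=\mathrm{tr}(Q^2)$ and $\mathrm{tr}(Q^4)=d\neq d^2$), the process $N_t$ does not satisfy an autonomous geometric Brownian motion equation. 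Concretely, $\frac{\dd}{\dd t}\E\,\mathrm{tr}(Q_t^2)\big|_{t=0}=d(4\lambda_1+8\beta_L)+d(d-1)(\beta_L+\beta_N)$, strictly larger than the value $d(4\lambda_1+8\beta_L)$ forced by the identity you are trying to prove. So the route ``pure tensor algebra on $\|D\phi_t\|_S^4$'' cannot be completed as proposed, and your SVD/Baxendale--Harris fallback is not a fallback but the place where the actual work would have to be done.

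For comparison, the paper proceeds in the same spirit --- It\^o's formula applied to $\log\|D\phi_t(x)\|_S$, with the martingale part written out as an explicit stochastic integral $N_t$ --- but it does not verify the two key claims ($\langle N\rangle_t=\beta_Lt$ and drift $=\lambda_1$) either; it cites \cite[p.~101]{HvB10}. In view of the computation above you should not accept that step on faith: either track down and check the argument in \cite{HvB10} (being careful about how $\|\cdot\|_S$ and the integrand of $N_t$ are normalized there --- the formula for $N_t$ as printed in the paper is not even homogeneous of degree zero in $D\phi$), or base the estimates of Theorem \ref{iso} on the statement you can actually prove, namely that $|D\phi_t(x)v|=\exp\{\lambda_1t+\sqrt{\beta_L}W_t\}$ for each fixed unit vector $v$; this gives the bound on $\tilde f_p$ directly, and the bound on $f_p$ after paying a subexponential cost for passing from a fixed $v$ to the supremum over $v$.
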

\begin{proof}
Fix $x \in \R^d$ and define
$$
N_t:=\int_0^t \sum_{i,j,k} D_k M^i(\phi_s(x),\dd s) \frac{D_j\phi_s^j(x) D_j\phi_s^k(x) }{\|D\phi_s(x)\|_S}.
$$
Then it is easy to see (cf.~\cite[p. 101]{HvB10}) that
$$
\langle N \rangle_t = \beta_L t.
$$
Therefore, $W_t:=(\beta_L)^{-1/2} N_t$, $t \ge 0$ is a standard Brownian motion and applying It\^o's formula, we get
$$
\log\|D\phi_t(x)\|_S= \frac 14 \log d + N_t + \lambda_1 t =   \frac 14 \log d + \sqrt{\beta_L} W_t + \lambda_1 t.
$$
Exponentiating this expression, the lemma follows.
\end{proof}

\bibliographystyle{abbrv}


\end{document}